\newcommand{\bA}{\mathbf{A}}
\newcommand{\bC}{\mathbf{C}}
\newcommand{\bE}{\mathbf{E}}
\newcommand{\bG}{\mathbf{G}}
\newcommand{\bH}{\mathbf{H}}
\newcommand{\bL}{\mathbf{L}}
\newcommand{\bM}{\mathbf{M}}
\newcommand{\bN}{\mathbf{N}}
\newcommand{\bO}{\mathbf{O}}
\newcommand{\cO}{\mathcal{O}}
\newcommand{\bS}{\mathbf{S}}
\newcommand{\fS}{\mathfrak{S}}
\newcommand{\bU}{\mathbf{U}}
\newcommand{\bV}{\mathbf{V}}
\newcommand{\bW}{\mathbf{W}}
\newcommand{\bk}{\mathbf{k}}
\let\ol\overline
\let\ul\underline
\renewcommand{\phi}{\varphi}
\newcommand{\arxiv}[1]{\href{http://arxiv.org/abs/#1}{{\tt arXiv:#1}}}
\def\Ddots{\mathinner{\mkern1mu\raise\p@
\vbox{\kern7\p@\hbox{.}}\mkern2mu
\raise4\p@\hbox{.}\mkern2mu\raise7\p@\hbox{.}\mkern1mu}}
\DeclareMathOperator{\tr}{tr}
\DeclareMathOperator{\Sym}{Sym}
\DeclareMathOperator{\Mod}{Mod}
\newcommand{\tors}{\mathrm{tors}}
\newcommand{\GL}{\mathbf{GL}}
\newcommand{\Sp}{\mathbf{Sp}}
\numberwithin{equation}{section}
\newtheorem{theorem}[equation]{Theorem}
\newtheorem{proposition}[equation]{Proposition}
\newtheorem{lemma}[equation]{Lemma}
\newtheorem{corollary}[equation]{Corollary}
\theoremstyle{definition}
\newtheorem{rmk}[equation]{Remark}
\newtheorem{eg}[equation]{Example}
\newtheorem{defn}[equation]{Definition}
\renewcommand{\thesubsection}{%
  \ifnum\c@subsection<1 \@arabic\c@section
  \else \thesection.\@arabic\c@subsection
  \fi
}
\DeclareMathOperator{\chr}{char}
\author{Rob H.\ Eggermont}
\address{Department of Mathematics, University of Michigan, Ann Arbor, MI}
\email{\href{mailto:robegger@umich.edu}{robegger@umich.edu}}
\urladdr{\url{http://www-personal.umich.edu/~robegger/}}
\author{Andrew Snowden}
\address{Department of Mathematics, University of Michigan, Ann Arbor, MI}
\email{\href{mailto:asnowden@umich.edu}{asnowden@umich.edu}}
\urladdr{\url{http://www-personal.umich.edu/~asnowden/}}
\thanks{AS was supported by NSF grants DMS-1303082 and DMS-1453893 and a Sloan Fellowship.}
\title[Topological noetherianity for algebraic representations]{Topological noetherianity for algebraic representations of infinite rank classical groups}
\date{\today}
\begin{document}

\begin{abstract}
Draisma recently proved that polynomial representations of $\GL_{\infty}$ are topologically noetherian. We generalize this result to algebraic representations of infinite rank classical groups.
\end{abstract}

\maketitle

\section{Introduction}

\subsection{Background}

In recent years, a number of novel noetherianity results have been discovered and exploited; for but a few examples, see \cite{aschenbrenner, cohen, cohen2, fimodule, plucker, derksen, DraismaKuttler, eggermont, hillar, sym2noeth, putman-sam, delta, catgb}. It is not yet clear where the ultimate line between noetherianity and non-noetherianity lies: of the reasonable structures to consider, some are known to be noetherian and some not, and in between is a vast unknown territory.

Recently, Draisma \cite{draisma} proved a breakthrough result claiming a large tract of the unknown for the noetherian side. To state it, we must recall some terminology. Fix an algebraically closed field $\bk$ and let $\GL=\bigcup_{n \ge 1} \GL_n(\bk)$. A representation of $\GL$ is called {\bf polynomial} if it is a subquotient of a finite\footnote{In other settings, one allows infinite sums of tensor powers, but in this paper we restrict to finite sums.} direct sum of tensor products of the standard representation $\bV=\bigcup_{n \ge 1} \bk^n$. If $\bE$ is a polynomial representation then the dual space $\hat{\bE}$ is canonically identified with the $\bk$-points of the spectrum of the ring $\Sym(\bE)$, and in this way inherits a Zariski topology. Recall that if a group $G$ acts on a space $X$ (such as $\GL$ on $\hat{\bE}$) then we say that $X$ is {\bf topologically $G$-noetherian} if every descending chain of $G$-stable closed subsets of $X$ stabilizes. We can now state Draisma's theorem:

\begin{theorem}[Draisma]
Let $\bE$ be a polynomial representation of $\GL$. Then $\hat{\bE}$ is topologically $\GL$-noetherian.
\end{theorem}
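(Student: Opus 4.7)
The plan is to argue by noetherian induction on the polynomial representation $\bE$, well-ordered by the vector of Schur-functor multiplicities read lexicographically from the highest degree downward. I would first reduce to the case $\bE = \Sym^d(\bV)$: direct sums cause no difficulty because $\hat{\bE_1 \oplus \bE_2} = \hat{\bE}_1 \times \hat{\bE}_2$ and topological $\GL$-noetherianity passes through products of $\GL$-noetherian spaces; and every Schur functor $\bS_\lambda(\bV)$ embeds $\GL$-equivariantly as a summand of an iterated tensor product of symmetric powers, so once the symmetric-power case (and products of such) is handled, the rest follows. For the inductive step it then suffices to show that every proper closed $\GL$-stable $X \subsetneq \hat{\bE}$ is itself topologically $\GL$-noetherian; the full conclusion follows by the standard argument that if every proper closed subset of $\hat{\bE}$ is noetherian, so is $\hat{\bE}$ itself.

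Fix $\bE = \Sym^d(\bV)$ with $d > 0$, assume the theorem for all strictly smaller polynomial representations, and choose a nonzero $f \in \Sym(\bE)$ that vanishes on $X$. Decompose $\bV = \bV_0 \oplus \bV'$ with $\bV_0$ finite-dimensional; this induces a bigrading
\[
\Sym^d(\bV) = \bigoplus_{i+j=d} \Sym^i(\bV_0) \otimes \Sym^j(\bV')
\]
and a corresponding filtration on $\Sym(\Sym^d(\bV))$. Using the polarization / $\fgl(\bV)$-derivation technique—iteratively hitting $f$ with elements of $\fgl(\bV)$ that move $\bV'$ into $\bV_0$—I would extract a new equation $\tilde f$ vanishing on a $\GL(\bV')$-translate of $X$, whose leading component in the filtration is supported in bidegrees $(i,j)$ with $j$ strictly less than $d$. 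This leading component then lives in a polynomial representation $\bF$ of $\bV'$ that is strictly smaller than $\Sym^d$ in the chosen well-ordering.

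This leading-term analysis should deliver, on a dense $\GL$-invariant open subset $U \subseteq \hat{\bE}$, a closed $\GL$-equivariant map
\[
\Phi \colon Z \times \hat{\bF(\bV')} \longrightarrow U
\]
with $Z$ a finite-type affine variety parametrizing the ``top-degree in $\bV_0$'' coordinates of points in $U$, such that $X \cap U \subseteq \im \Phi$. The induction hypothesis makes $\hat{\bF(\bV')}$ topologically $\GL$-noetherian, hence so is $Z \times \hat{\bF(\bV')}$, and noetherianity is inherited by preimages and closed images, handling $X \cap U$; the complement $\hat{\bE} \setminus U$ is a proper closed $\GL$-stable subset that is defeated by a secondary induction on the well-ordering. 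The principal technical obstacle is exactly the construction of $\tilde f$ and $\Phi$: one must verify that finitely many $\fgl$-derivations of $f$ isolate a leading coefficient genuinely living in a smaller polynomial functor, and then convert that information into an equivariant parametrization of $X$ by the product. This is the shift-and-project heart of Draisma's argument, and its verification requires careful bookkeeping of the Pieri-type decompositions of $\Sym^d(\bV_0 \oplus \bV')$ and of the $\fgl$-submodule generated by $f$.
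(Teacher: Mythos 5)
This statement is not proved in the paper at all: it is Draisma's theorem, quoted with a citation to [Dr], and the paper's own contribution (Theorem~\ref{thm:main}) takes it as input (e.g.\ via Proposition~\ref{prop:glgl}). So the only meaningful comparison is with Draisma's argument itself, and your proposal is essentially an outline of that argument (noetherian induction on polynomial functors with a lexicographic order, splitting $\bV=\bV_0\oplus\bV'$, a leading-term/shift analysis producing a smaller functor, and a parametrization of a proper closed $\GL$-stable subset over a dense open set). As a proof, however, it has genuine gaps. First, the preliminary reduction to $\bE=\Sym^d(\bV)$ is not valid as stated: topological $\GL$-noetherianity is \emph{not} known to pass to products, so the claim that direct sums ``cause no difficulty'' because $\widehat{\bE_1\oplus\bE_2}=\hat{\bE}_1\times\hat{\bE}_2$ begs exactly the question the induction is designed to solve (the Zariski topology on the product has many more $\GL$-stable closed sets than products of closed sets); moreover, over a field of positive characteristic (which the paper allows) polynomial representations are not semisimple, Schur functors are generally not direct summands of tensor products of symmetric powers, and the bookkeeping by ``Schur-functor multiplicities'' does not make sense. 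This is precisely why Draisma runs the induction on arbitrary polynomial functors, peeling off a top-degree piece $\Sym^d$ (or, in characteristic $p$, handling the failure of $\Sym^d$ to be irreducible) rather than reducing to a single symmetric power.

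Second, the heart of the matter --- producing $\tilde f$, the invariant open $U$, and the closed equivariant parametrization $\Phi\colon Z\times\widehat{\bF(\bV')}\to U$ with $X\cap U\subseteq\im\Phi$, and verifying that the leading datum really lives in a strictly smaller functor --- is exactly what you defer as ``the principal technical obstacle.'' Without that construction the argument is a plan, not a proof; all the content of Draisma's theorem sits in that step. (Smaller points: ``noetherianity is inherited by preimages'' is false in general --- what you need is that a closed subset of the image of a $\GL$-noetherian space under a continuous equivariant map is $\GL$-noetherian --- and the treatment of the complement $\hat{\bE}\setminus U$ by a ``secondary induction'' needs to be spelled out, since that complement is again a closed subset of $\hat{\bE}$ and not obviously attached to a smaller functor.) So the proposal correctly identifies the architecture of Draisma's proof but does not supply the parts that make it work.
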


This result has already found application: Erman--Sam--Snowden \cite{ess} have combined it with the resolution of Stillman's conjecture by Ananyan--Hochster \cite{hochster} to establish a vast generalization of Stillman's conjecture.

\subsection{The main theorem}

Draisma's theorem elicits a few natural follow-up questions. Does noetherianity hold for any non-polynomial representations of $\GL$? And what about representations of similar groups, such as the infinite orthogonal group? The purpose of this paper is to provide some answers to these questions.

Let $\bO=\bigcup_{n \ge 1} \bO_n(\bk)$ be the infinite orthogonal group, and let $\Sp=\bigcup_{n \ge 1} \Sp_{2n}(\bk)$ be the infinite symplectic group. We say that a representation of $\bO$ or $\Sp$ is {\bf algebraic} if it appears as a subquotient of a finite direct sum of tensor powers of the standard representation $\bV$. Let $\bV_*=\bigcup_{n \ge 1} (\bk^n)^*$, the so-called restricted dual of $\bV$. The group $\GL$ acts on $\bV_*$. A representation of $\GL$ is called {\bf algebraic} if it appears as a subquotient of a finite direct sum of representations of the form $\bV^{\otimes n} \otimes \bV_*^{\otimes m}$. We note that $\bV \cong \bV_*$ as representations of $\bO$ and $\Sp$, so one does not get a larger category of representations by using $\bV_*$. In characteristic~0, algebraic representations of $\GL$, $\bO$, and $\Sp$ have been studied in \cite{koszulcategory, penkovserganova, penkovstyrkas, infrank}.

The main theorem of this paper is:

\begin{theorem} \label{thm:main}
Let $\bG \in \{\bO, \Sp, \GL \}$ and let $\bE$ be an algebraic representation of $\bG$. Then $\hat{\bE}$ is topologically $\bG$-noetherian.
\end{theorem}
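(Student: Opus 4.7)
The plan is built on three reductions that ultimately bring Draisma's theorem to bear on the problem. First, topological $\bG$-noetherianity is preserved under taking closed subsets and under $\bG$-equivariant quotients of the ambient vector space, and any algebraic $\bG$-representation $\bE$ is by definition a subquotient of a finite direct sum of tensor powers (of $\bV$ when $\bG \in \{\bO, \Sp\}$, and of $\bV$ and $\bV_*$ when $\bG = \GL$). Consequently $\hat\bE$ sits as a closed subset of a quotient of $\hat{\bE'}$ for such an $\bE'$, so it suffices to prove the theorem when $\bE$ is one of these standard tensor spaces.

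Second, the $\GL$ case can be reduced to the $\bO$ case. Upon restricting from $\GL$ to $\bO$, the orthogonal form induces $\bV \cong \bV_*$, so $\bV^{\otimes a}\otimes \bV_*^{\otimes b}$ becomes isomorphic to the polynomial $\bO$-representation $\bV^{\otimes (a+b)}$. Any descending chain of closed $\GL$-stable subsets is \emph{a fortiori} a descending chain of closed $\bO$-stable subsets, so topological $\bO$-noetherianity of $\hat{\bV^{\otimes (a+b)}}$ implies topological $\GL$-noetherianity of $\hat{\bV^{\otimes a}\otimes \bV_*^{\otimes b}}$.

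The main work is therefore to prove topological $\bG$-noetherianity of $\hat{\bV^{\otimes n}}$ for $\bG \in \{\bO, \Sp\}$; Draisma's theorem only supplies $\GL$-noetherianity, which is strictly weaker. The idea is to trade the smaller group $\bG$ for the larger group $\GL$ acting on a larger polynomial representation, by remembering the invariant form. Let $W = \Sym^2 \bV$ or $\lw^2 \bV$ according as $\bG$ is $\bO$ or $\Sp$, and let $\omega \in \hat W$ be the defining $\bG$-invariant form, whose $\GL$-stabilizer is precisely $\bG$. Set $\bF = \bV^{\otimes n} \oplus W$, a polynomial $\GL$-representation to which Draisma's theorem applies. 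To each closed $\bG$-stable subset $Y \subseteq \hat{\bV^{\otimes n}}$ associate the closed $\GL$-stable subset
\[
\Phi(Y) \;=\; \overline{\GL \cdot (Y \times \{\omega\})} \;\subseteq\; \hat\bF .
\]
The map $\Phi$ is order-preserving, and if it can be shown to be strictly order-preserving then any descending chain of closed $\bG$-stable subsets in $\hat{\bV^{\otimes n}}$ produces a strictly descending chain of closed $\GL$-stable subsets in $\hat\bF$, which stabilizes by Draisma's theorem, completing the proof.

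The principal obstacle lies in verifying the strict order preservation of $\Phi$, which reduces to a ``rigidity'' statement for $\omega$: the fiber of $\Phi(Y)$ over $\omega$ should equal $Y \times \{\omega\}$, i.e., whenever $(x,\omega)$ lies in the closure of $\GL \cdot (Y \times \{\omega\})$, one must conclude $x \in Y$. In finite dimensions this is automatic because $\bG = \mathrm{Stab}_{\GL}(\omega)$ is Zariski closed, but in the infinite-rank setting the $\GL$-orbit of $\omega$ is not locally closed, and the argument will have to be more delicate. I would attack this by stratifying $\overline{\GL \cdot \omega} \subseteq \hat W$ according to the (possibly infinite) corank of the form and running a secondary induction on strata: each degenerate stratum corresponds to an invariant form on a smaller infinite-rank vector space, with stabilizer a smaller classical group, and on each such stratum the fiber analysis should reduce to an inductive instance of the theorem. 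This stratification-and-induction step is where I expect essentially all the technical difficulty to lie.
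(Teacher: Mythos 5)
Your overall architecture is in fact the same as the paper's: adjoin the invariant tensor to the representation, send a $\bG$-stable closed set $Y$ to the closure $\Phi(Y)$ of the big-group orbit of $Y\times\{\omega\}$ inside the dual of a genuinely polynomial representation, recover $Y$ as the fiber of $\Phi(Y)$ over $\omega$, and conclude from Draisma's theorem via an order-preserving injection of posets of closed stable subsets. (The paper runs this with $\GL$ embedded in $\GL\times\GL$ as $\{(g,{}^tg^{-1})\}$ and invariant tensor the identity matrix $I\in\hat{\bM}$, after showing in \S\ref{s:red} that the three groups are interchangeable.) The genuine gap is that essentially all of the content of the theorem lies in the ``rigidity'' statement you isolate --- the analogue of Proposition~\ref{prop:main}, that the fiber of $\Phi(Y)$ over $\omega$ is exactly $Y\times\{\omega\}$ --- and you do not prove it. The strategy you sketch (stratify $\overline{\GL\cdot\omega}\subseteq\hat{W}$ by corank and induct over strata with smaller classical stabilizers) does not engage the actual difficulty: the point $(x,\omega)$ you must exclude lies over $\omega$ itself, i.e.\ over the nondegenerate stratum, so an induction over degenerate strata says nothing about it; and since $\Phi(Y)$ is a Zariski closure, excluding $(x,\omega)$ means exhibiting an explicit regular function vanishing on $\GL\cdot(Y\times\{\omega\})$ but not at $(x,\omega)$, which no amount of stratifying the boundary produces.

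For comparison, the paper's proof of the fiber statement consists precisely of manufacturing such equations, and this is where all the work is: one needs the continuous extension of the action of pro-upper-triangular matrices $\hat{\bU}$ to $\hat{\bE}$ (\S\ref{s:prelim}), the density of $\bW_m\bH_m$ in $\bG_m$ (triangular decomposition of a generic group element), and above all Lemma~\ref{lem:1}, which says that pullback along $\varphi(u,v)={}^tuIv$ identifies regular functions on $\hat{\bM}_n$ with all regular functions on $\hat{\bW}_n$ after inverting the diagonal entries; this yields, for each equation $f$ of $Z$ of level $n$, an equation $H_f$ of $Z^+$ with $H_f(I,z)=m_f(I)f(z)\neq 0$ for $z\notin Z$ (Lemmas~\ref{lem:2} and~\ref{lem:3}). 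If you insist on running the argument directly for $\bO$ or $\Sp$ as you propose, the analogous decomposition is of Cholesky type, $\omega\mapsto {}^tu\,\omega\,u$ with $u$ triangular, which requires square roots of the diagonal entries and is problematic for $\bO$ in characteristic~$2$ (quadratic versus symmetric bilinear forms); this is exactly why the paper first proves the equivalence of the three cases (Lemma~\ref{lem:red}) and then works with $\GL\subset\GL\times\GL$, where two independent triangular factors are available and one can simply normalize one of them to have unit diagonal. So the frame of your proposal matches the paper, but the key lemma --- the heart of the proof --- is missing, and the stratification idea as stated would not supply it.
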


In \S \ref{s:counter}, we show that certain very similar looking statements are false. It would be interesting to see if this theorem has any applications in commutative algebra along the lines of \cite{ess}. We note, however, that \cite{ess} makes use of the kind of statements found in \S \ref{s:counter} that are true in the case of polynomial representations, but false for algebraic representations.

\subsection{Possible generalizations}

There are two directions in which Theorem~\ref{thm:main} might be generalized. First, one might hope for a statement at the level of ideals. That is, suppose that $\bE$ is an algebraic representation of $\bG$ (for $\bG$ as in the theorem), and let $A=\Sym(\bE)$. The theorem is equivalent to the statement that any ascending chain of $\bG$-stable radical ideals in $A$ stabilizes. A plausible stronger statement is that every ascending chain of $\bG$-stable ideals stabilizes. More generally, if $\bE'$ is a second algebraic representation, one might hope that every ascending chain of $\bG$-stable submodules of $A \otimes \bE'$ stabilizes.

Second, one might hope to extend topological noetherianity to certain analogs of algebraic representations. Let $A=\Sym(\Sym^2(\bC^{\infty}))$, equipped with its natural action of $\GL$. One can then consider the category $\Mod_A$ of $A$-modules equipped with a compatible polynomial action of $\GL$. Let $\Mod_A^{\tors}$ be the Serre subcategory of torsion modules. It is known \cite{sym2noeth} that the quotient category $\Mod_A/\Mod_A^{\tors}$ is equivalent to the category of algebraic representations of $\bO$ (as a tensor category). Thus Theorem~\ref{thm:main} in the case $\bG=\bO$ can be stated as: every finitely generated algebra in the category $\Mod_A/\Mod_A^{\tors}$ is topologically noetherian. It seems plausible that this statement might hold true for any $A$ of the form $\Sym(\bE)$, with $\bE$ a polynomial representation of $\GL$.

\subsection{Outline}

In \S \ref{s:red}, we show that it suffices to prove Theorem~\ref{thm:main} for any one of the three groups. In \S \ref{s:prelim}, we go over some preliminary material. The main theorem (in the general linear case) is proved in \S \ref{s:main}. In \S \ref{s:counter}, we discuss counterexamples to certain variants of the main theorem.

\subsection*{Acknowledgments}

We thank Steven Sam for helpful discussions.

\section{Reduction to the general linear case} \label{s:red}

We now show that it suffices to prove Theorem~\ref{thm:main} in any one of the three cases. Later, we will prove the theorem in the general linear case.

\begin{lemma} \label{lem:red}
Suppose that $\bG, \bH \in \{\GL,\Sp,\bO\}$ and that $\bH \to \bG$ is a homomorphism such that the standard representation of $\bG$ pulls back to an algebraic representation of $\bH$. Suppose also that Theorem~\ref{thm:main} holds for $\bH$. Then Theorem~\ref{thm:main} holds for $\bG$.
\end{lemma}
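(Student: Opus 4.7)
The strategy is to pull back along the homomorphism $\bH \to \bG$ and apply the $\bH$-case of Theorem~\ref{thm:main}. Given an algebraic representation $\bE$ of $\bG$, I would regard the underlying vector space as an $\bH$-representation via the given homomorphism; this does not change the set $\hat{\bE}$ of $\bk$-points, only which subsets are stable. If the pulled-back representation is algebraic for $\bH$, then $\hat{\bE}$ is topologically $\bH$-noetherian by hypothesis. Since $\bH$ acts on $\hat{\bE}$ through $\bG$, every $\bG$-stable subset of $\hat{\bE}$ is automatically $\bH$-stable, so any descending chain of $\bG$-stable closed subsets is a descending chain of $\bH$-stable closed subsets and therefore stabilizes.

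The key technical step is thus to show that pullback along $\bH \to \bG$ sends algebraic representations of $\bG$ to algebraic representations of $\bH$. First I would record the following closure properties, all immediate from the definition: the class of algebraic $\bH$-representations is closed under finite direct sums, tensor products, and subquotients (a subquotient of a subquotient of a sum of tensor powers of the standard representation is itself such a subquotient, and tensor products distribute over sums). By hypothesis $\bV_\bG$ pulls back to an algebraic $\bH$-representation. Combining these observations, every finite direct sum of tensor powers of $\bV_\bG$, and every subquotient thereof, pulls back to an algebraic $\bH$-representation. This handles the cases $\bG \in \{\bO,\Sp\}$ directly, since for these groups algebraicity is defined in terms of tensor powers of $\bV_\bG$ alone.

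The only wrinkle is the case $\bG = \GL$, where algebraic representations may involve both $\bV$ and the restricted dual $\bV_*$, and the hypothesis on $\bV_\bG$ alone does not obviously say that $\bV_*$ pulls back to an algebraic $\bH$-representation. This is not a genuine obstruction for the intended applications, in which the lemma is used with $\bH = \GL$ and $\bG \in \{\bO,\Sp\}$ via the natural doubling homomorphisms $\GL \hookrightarrow \bO$ and $\GL \hookrightarrow \Sp$; for these, $\bV_\bG$ pulls back to $\bV \oplus \bV_*$, which is algebraic over $\GL$. Thus the proof essentially reduces to the formal closure properties above, and no serious obstacle appears.
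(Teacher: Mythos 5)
Your core argument is exactly the paper's: restriction along $\bH \to \bG$ turns an algebraic $\bG$-representation into an algebraic $\bH$-representation, every $\bG$-stable closed subset of $\hat{\bE}$ is $\bH$-stable, and so descending chains stabilize by the $\bH$-case of the theorem. The paper asserts the pullback claim in one sentence; your closure-property elaboration is a fine way to justify it for $\bG \in \{\bO,\Sp\}$.

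The problem is how you dispose of the case $\bG = \GL$. You claim this case is not among the intended applications, but it is: in the paper's reduction the lemma is applied to the inclusions $\bO \to \GL$ and $\Sp \to \GL$ in order to deduce the $\GL$ case of Theorem~\ref{thm:main} from the orthogonal or symplectic case, so a proof that sets $\bG = \GL$ aside does not suffice (and in any event the lemma as stated covers it). The wrinkle you correctly identified -- that the hypothesis speaks only of the standard representation $\bV$, while algebraic $\GL$-representations are built from $\bV$ and $\bV_*$ -- is resolved by the observation, recorded in the paper's introduction, that $\bV \cong \bV_*$ as representations of $\bO$ and $\Sp$ (the isomorphism induced by the invariant bilinear form, which takes values in the restricted dual). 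Hence along these inclusions $\bV_*$ also pulls back to an algebraic representation, and your closure-property argument then covers subquotients of sums of $\bV^{\otimes n} \otimes \bV_*^{\otimes m}$ as well. With that one added sentence your proof is complete and coincides with the paper's.
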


\begin{proof}
The hypothesis implies that any algebraic representation of $G$ pulls back to an algebraic representation of $\bH$. Let $\bE$ be an algebraic representation of $\bG$. Suppose that $Z_{\bullet} \subset \hat{\bE}$ is a descending chain of $\bG$-stable closed subset. Then it is also a descending chain of $\bH$-stable closed subsets, and thus stabilizes. Thus $\hat{\bE}$ is topologically $\bG$-noetherian.
\end{proof}

\begin{proposition}
If Theorem~\ref{thm:main} holds for any one of the three groups then it holds for the other two.
\end{proposition}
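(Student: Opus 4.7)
The plan is to invoke Lemma~\ref{lem:red} along suitable homomorphisms connecting the three groups. I would exhibit four maps. First, the standard inclusions $\bO \hookrightarrow \GL$ and $\Sp \hookrightarrow \GL$, obtained as colimits of $\bO_n \hookrightarrow \GL_n$ and $\Sp_{2n} \hookrightarrow \GL_{2n}$; under these, the standard representation of $\GL$ pulls back to the standard representation of $\bO$ or $\Sp$, which is tautologically algebraic. Second, ``doubling'' maps $\GL \to \bO$ and $\GL \to \Sp$, obtained by assembling the finite-level embeddings
\[
\GL_n \longrightarrow \bO_{2n}, \qquad \GL_n \longrightarrow \Sp_{2n}, \qquad g \longmapsto \diag\bigl(g,(g^T)^{-1}\bigr),
\]
where one views $\bk^n \oplus (\bk^n)^*$ as carrying the hyperbolic form $Q((v,f)) = f(v)$, respectively the symplectic form $\omega((v,f),(v',f')) = f'(v) - f(v')$. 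Under either doubling map, the standard representation of $\bO$ or $\Sp$ pulls back to $\bV \oplus \bV_*$, which is algebraic for $\GL$ by definition.

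Given these four homomorphisms, Lemma~\ref{lem:red} directly yields the implications: Theorem~\ref{thm:main} for $\bO$ or for $\Sp$ implies it for $\GL$ (via the two standard inclusions), and Theorem~\ref{thm:main} for $\GL$ implies it for both $\bO$ and $\Sp$ (via the two doubling maps). Chaining these, Theorem~\ref{thm:main} for any one of the three groups propagates to the other two, which is exactly what the proposition asserts.

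No step should present a serious obstacle. The verifications are routine: the finite-level doubling maps actually land in the orthogonal and symplectic groups (a straightforward bilinear-algebra check with the pairing between $\bk^n$ and $(\bk^n)^*$); these maps are compatible with the filtrations, so the colimits $\GL \to \bO$ and $\GL \to \Sp$ are well-defined; and $\bV \oplus \bV_*$ is algebraic for $\GL$ essentially by definition. The only mild bookkeeping is to keep the indexing of the unions straight, noting that $\bO_{2n} \subset \bO = \bigcup_n \bO_n$ and $\Sp_{2n} \subset \Sp = \bigcup_n \Sp_{2n}$, so that the finite-level maps truly assemble into homomorphisms of the infinite-rank groups.
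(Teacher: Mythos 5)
Your proposal is correct and follows essentially the same route as the paper: Lemma~\ref{lem:red} applied to the inclusions $\bO,\Sp \hookrightarrow \GL$ in one direction, and to the homomorphisms $\GL \to \bO$, $\GL \to \Sp$ coming from the hyperbolic quadratic/symplectic forms on $\bV \oplus \bV_*$ in the other, then chaining the implications. The only cosmetic difference is that by using the quadratic form $f(v)$ for the orthogonal case from the start you handle all characteristics uniformly, whereas the paper treats characteristic~$2$ as a separate case.
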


\begin{proof}
Suppose the theorem holds for $\GL$ and $\chr(\bk) \ne 2$. Consider the representation $\bV \oplus \bV_*$ of $\GL$. This representation carries both a symmetric form and a symplectic form, defined by the formulas $(v+\lambda,v'+\lambda')=\lambda(v') \pm \lambda'(v)$. These forms give group homomorphisms $\GL \to \bO$ and $\GL \to \Sp$. These homomorphisms have the property that the standard representation pulls back to the representation $\bV \oplus \bV_*$. Thus Theorem~\ref{thm:main} holds for $\bO$ and $\Sp$ by Lemma~\ref{lem:red}.

If $\chr(\bk)=2$ then a similar argument works. The bilinear form defined in the previous paragraph is alternating in characteristic~2 (i.e., it satisfies $(v+\lambda,v+\lambda)=0$) and thus yields a map $\GL \to \Sp$ that again allows us to apply Lemma~\ref{lem:red}. The representation $\bV \oplus \bV_*$ also admits a quadratic form defined by $(v,\lambda) \mapsto \lambda(v)$, which affords a homomorphim $\GL \to \bO$ to which we can apply Lemma~\ref{lem:red}.

Now suppose that Theorem~\ref{thm:main} holds for $\bO$. Applying Lemma~\ref{lem:red} to the inclusion $\bO \to \GL$, we see that Theorem~\ref{thm:main} holds for $\GL$. Appealing to the previous two paragraphs, we thus see that Theorem~\ref{thm:main} holds for $\Sp$ as well. Similarly, if Theorem~\ref{thm:main} holds for $\Sp$ then we get it for $\GL$ and then $\bO$.
\end{proof}

A similar argument is used in the following proposition, which we also require. A representation of $\GL \times \GL$ is {\bf polynomial} if it appears as a subquotient of a finite sum of representations of the form $\bV^{\otimes n} \otimes \bV^{\otimes m}$.

\begin{proposition} \label{prop:glgl}
Let $\bE$ be a polynomial representation of $\GL \times \GL$. Then $\hat{\bE}$ is topologically $\GL \times \GL$ noetherian.
\end{proposition}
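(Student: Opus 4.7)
The plan is to derive the proposition from Draisma's theorem (stated in the introduction) via a reduction analogous to Lemma~\ref{lem:red}. Partition the standard basis of $\bV$ into two infinite subsets, yielding a decomposition $\bV = \bV_1 \oplus \bV_2$ into two countably infinite-dimensional summands. The finite-support automorphisms of $\bV$ that preserve this decomposition and act as the identity on one summand form a subgroup of $\GL(\bV) = \GL$ canonically isomorphic to $\GL$ in the sense of the paper; the two such subgroups (one for each $i$) commute inside $\GL$, and their product yields an embedding $\iota \colon \GL \times \GL \injects \GL$ compatible with the directed-union presentation $\GL = \bigcup_n \GL_n$.

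Under $\iota$, the standard representation $\bV$ of the target $\GL$ pulls back to $\bV_1 \oplus \bV_2$, which as a representation of $\GL \times \GL$ is the external direct sum of the standard representations of the two factors. Hence $\bV^{\otimes N}$ pulls back to $(\bV_1 \oplus \bV_2)^{\otimes N}$, which contains $\bV_1^{\otimes n} \otimes \bV_2^{\otimes m}$ as a direct summand for every $n + m = N$. It follows that any polynomial representation $\bE$ of $\GL \times \GL$ arises as a $(\GL \times \GL)$-subquotient of $\iota^* \bF$ for some polynomial representation $\bF$ of $\GL$.

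By Draisma's theorem, $\hat\bF$ is topologically $\GL$-noetherian, and hence topologically $(\GL \times \GL)$-noetherian via $\iota$, exactly as in the proof of Lemma~\ref{lem:red}. A routine check shows that topological $G$-noetherianity passes to subquotients: a surjection $\bF \surjects \bF'$ dualizes to a closed embedding $\hat{\bF'} \injects \hat\bF$ (on which noetherianity is inherited trivially), while an injection $\bE \injects \bF'$ dualizes to a $G$-equivariant surjection $\hat{\bF'} \surjects \hat\bE$, along which $G$-stable closed subsets pull back and preserve descending chains. Combining these two facts yields the topological $(\GL \times \GL)$-noetherianity of $\hat\bE$. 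No serious obstacle arises; the only subtle point is the construction of $\iota$, which is handled by the finite-support convention above.
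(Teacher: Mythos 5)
There is a genuine gap, and it lies in the single sentence doing all the work: ``By Draisma's theorem, $\hat\bF$ is topologically $\GL$-noetherian, and hence topologically $(\GL\times\GL)$-noetherian via $\iota$, exactly as in the proof of Lemma~\ref{lem:red}.'' This runs Lemma~\ref{lem:red} in the wrong direction. That lemma transfers noetherianity from the \emph{source} of a homomorphism $\bH \to \bG$ to the \emph{target}: since $\bH$ acts through $\bG$, every $\bG$-stable closed set is $\bH$-stable, so a descending chain of $\bG$-stable sets is in particular a chain of $\bH$-stable sets and stabilizes. Your $\iota \colon \GL\times\GL \injects \GL$ goes the other way: you know noetherianity for the big group $\GL$ acting on $\hat\bF$ and want it for the subgroup $\iota(\GL\times\GL)$. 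But a $(\GL\times\GL)$-stable closed subset of $\hat\bF$ need not be $\GL$-stable, so descending chains of $(\GL\times\GL)$-stable closed sets are not controlled by Draisma's theorem at all. Noetherianity does not pass to subgroup actions in general --- indeed, if it did, the paper's main theorem would be immediate (apply the same reasoning to the subgroup $\bH \subset \GL\times\GL$ of elements $(g,{}^tg^{-1})$), whereas the entire argument of \S\ref{s:main} is needed precisely to handle that restriction. Your reduction is also circular in spirit: $\iota^*\bF$ is itself a polynomial representation of $\GL\times\GL$, so the claim that $\hat\bF$ is $(\GL\times\GL)$-noetherian via $\iota$ is an instance of the very proposition being proved.

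The remaining ingredients of your write-up are fine but do not repair this: the observation that $\bE$ is a $(\GL\times\GL)$-subquotient of $\iota^*\bF$ is correct, and your check that topological $G$-noetherianity of $\hat\bF$ descends to duals of subquotients (closed embedding for a quotient, equivariant surjection with injective pullback on closed sets for a subobject) is sound. The fix is to reverse the direction of the group homomorphism, which is exactly what the paper does: restrict along the diagonal embedding $\GL \to \GL\times\GL$. If $\bE$ is a subquotient of $\bigoplus_{i=1}^k \bV^{\otimes n_i}\otimes\bV^{\otimes m_i}$, then $\bE$ restricted to the diagonal copy of $\GL$ is a subquotient of $\bigoplus_{i=1}^k \bV^{\otimes(n_i+m_i)}$, hence polynomial; Draisma's theorem gives $\GL$-noetherianity of $\hat\bE$ for the diagonal action, and since every $(\GL\times\GL)$-stable closed subset is stable under the diagonal, the proposition follows.
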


\begin{proof}
Consider the diagonal copy of $\GL$ in $\GL \times \GL$. The restriction $\bE \vert_{\GL}$ is then polynomial: indeed, if $\bE$ is a subquotient of $\bigoplus_{i=1}^k \bV^{\otimes n_i} \otimes \bV^{\otimes m_i}$ then $\bE \vert_{\GL}$ is a subquotient of $\bigoplus_{i=1}^k \bV^{\otimes (n_i+m_i)}$. Since $\hat{\bE} \vert_{\GL}$ is topologically noetherian by Draisma's theorem, the result follows.
\end{proof}

\section{Preliminaries} \label{s:prelim}

\subsection{Spaces of matrices}

Let $\hat{\bM}$ be the set of matrices $(a_{i,j})_{i,j \in \bN}$ with $a_{i,j} \in \bk$. Let $\bM \subset \hat{\bM}$ be the subset consisting of matrices with only finitely many non-zero entries. We have a trace pairing
\begin{displaymath}
\langle, \rangle \colon \hat{\bM} \times \bM \to \bk, \qquad \langle A, B \rangle = \tr(A {}^tB)
\end{displaymath}
that identifies $\hat{\bM}$ with the linear dual of the space $\bM$. We let $\bM_n \subset \bM$ be the set of matrices $(a_{i,j})$ with $a_{i,j}=0$ for $i>n$ or $j>n$. We define $\hat{\bM}_n$ similarly, but regard it as a quotient of $\hat{\bM}$. Thus $\bM$ is the union of the $\bM_n$ and $\hat{\bM}$ is the inverse limit of the $\hat{\bM}_n$.

Let $\bU, \bL \subset \bM$ and $\hat{\bU}, \hat{\bL} \subset \hat{\bM}$ be the spaces of upper-triangular and lower-triangular matrices. The trace pairing identifies $\hat{\bL}$ with the linear dual of $\bL$ and $\hat{\bU}$ with the dual of $\bU$. One cannot multiply arbitrary elements of $\hat{\bM}$, as this would typically involve infinite sums. However, the product $AB$ is defined for $A \in \hat{\bL}$ and $B \in \hat{\bM}$, or for $A \in \hat{\bM}$ and $B \in \hat{\bU}$. We define $\bL_n$, $\bU_n$, $\hat{\bL}_n$, and $\hat{\bU}_n$ in the obvious ways.

We let $\bV_n=\bk^n$ and $\bV=\bigcup_{n \ge 1} \bV_n$. We let $\hat{\bV}$ and $\hat{\bV}_n$ be the dual spaces to $\bV$ and $\bV_n$, so that $\hat{\bV}$ is the inverse limit of the spaces $\hat{\bV}_n$.

\subsection{Polynomial representations}

Let $\bE$ be a polynomial representation of $\GL$. Then the action of $\GL_n \subset \GL$ extends uniquely to an action of the monoid $\bM_n$, and these assemble to an action of $\bM$. Furthermore, the action of $\bU$ on the dual $\hat{\bE}$ extends uniquely to a continuous action of $\hat{\bU}$, where continuous means that for fixed $v \in \hat{\bE}$ the quantity $uv$ depends only on the projection of $u$ to $\hat{\bU}_n$, for some $n$ depending only on $v$. This is easy to see when $\bE=\bV$: the point is that, if $e_i$ denotes the $i$th basis vector of $\bV$, then $u e_i^*$ only depends on the top left $i \times i$ block of $u$. The action of $\bL$ on $\hat{\bE}$ does \emph{not} similarly extend to an action of $\hat{\bL}$.

We can then equivalently think of the representation $\bE$ as a polynomial functor $\ul{\bE}$ on the category of $\bk$-vector spaces. We let $\bE_n$ be the value of the functor $\ul{\bE}$ on $\bk^n$, so that $\bE$ itself is identified with the union of the $\bE_n$. We let $\hat{\bE}$ and $\hat{\bE}_n$ be the linear duals of $\bE$ and $\bE_n$, so that $\hat{\bE}$ is the inverse limit of the $\hat{\bE}_n$. We note that $\bE_n \subset \bE$ is stable under the action of $\bU$ and $\hat{\bU}$. It follows that the projection map $\hat{\bE} \to \hat{\bE}_n$ is compatible with the action of $\hat{\bU}$.

For the purposes of this paper, we really only need to consider $\bE$'s that are finite sums of tensor powers of $\bV$. In this case, one does not need to think about polynomial functors: the space $\bE_n$ is then the corresponding sum of tensor powers of $\bV_n$.

We identify $\hat{\bE}$ with the spectrum of the ring $\Sym(\bE)$ (or more accurately, the $\bk$-points of the spectrum), and equip it with the Zariski topology. By definition, a regular function $f$ on $\hat{\bE}$ is an element of $\Sym(\bE)$. Since $\bE$ is the union of the $\bE_n$, it follows that $f$ belongs to $\Sym(\bE_n)$ for some $n$; we say that $f$ has {\bf level} $n$. This implies that $f$ factors through the projection $\hat{\bE} \to \hat{\bE}_n$.

\section{The general linear case} \label{s:main}

Let $\bG=\GL \times \GL$, and let $\bH$ be the subgroup of elements of the form $(g, {}^tg^{-1})$. Of course, $\bH$ is isomorphic to $\GL$. By definition, every algebraic representation of $\bH$ is a subquotient of $\bE \vert_{\bH}$ for some polynomial representation $\bE$ of $\bG$, so it suffices to prove noetherianity of $\hat{\bE} \vert_{\bH}$ for all such $\bE$. We therefore fix $\bE$ for the rest of this section, and prove that $\hat{\bE} \vert_{\bH}$ is noetherian.

Let $\bG$ act on $\bM$ by the formula $(g,h) \cdot A=gA {}^th$. The dual action of $\bG$ on $\hat{\bM}$ is given by the formula $(g,h) \cdot A = {}^tg^{-1} A h^{-1}$. Let $I \in \hat{\bM}$ be the identity matrix. Note that the stabilizer of $I$ in $\bG$ is exactly $\bH$. Given an $\bH$-stable closed subset $Z$ of $\hat{\bE}$, let $Z^+$ be the closure of the $\bG$-orbit of the set $\{I\} \times Z$ in $\hat{\bM} \times \hat{\bE}$. (The topology on the product is the Zariski topology on the product of schemes, which is not the product topology.) We will prove:

\begin{proposition} \label{prop:main}
Let $Z$ be an $\bH$-stable closed subset of $\hat{\bE}$. Then $\{I\} \times Z=(\{I\} \times \hat{\bE}) \cap Z^+$.
\end{proposition}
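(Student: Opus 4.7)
The forward inclusion $\{I\}\times Z \subseteq (\{I\}\times\hat{\bE})\cap Z^+$ is immediate: $\{I\}\times Z$ is already in the $\bG$-orbit of itself, since the stabilizer $\bH$ of $I$ preserves $Z$. The content is the reverse inclusion. The plan is to fix a point $(I,v)\in Z^+$ and show $v\in Z$; since $Z$ is closed, it suffices to verify $f(v)=0$ for every regular function $f\in\Sym(\bE)$ vanishing on $Z$.

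Fix such an $f$, of some level $n$. I will construct a regular function $F_f$ on $\hat{\bM}\times\hat{\bE}$ such that (i) $F_f(I,w)=f(w)$ for all $w\in\hat{\bE}$, and (ii) $F_f$ vanishes on the entire $\bG$-orbit of $\{I\}\times Z$. Granted these, $F_f$ vanishes on the closure $Z^+$, so $0=F_f(I,v)=f(v)$, as needed.

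The motivating group-theoretic identity is this: if $(g,h)\in\bG$ and $A={}^t\!g^{-1}h^{-1}$ (i.e.\ $A$ lies on the $\bG$-orbit of $I$), then
\[
(I,A)\cdot(g,h) \;=\; (g,\,Ah) \;=\; (g,\,{}^t\!g^{-1}) \;\in\; \bH.
\]
So, using the $\bG$-action, $(I,A)\cdot(g,h)\cdot z=(g,{}^t\!g^{-1})\cdot z$ lies in $\bH\cdot z\subseteq Z$ whenever $z\in Z$. This leads to the definition $F_f(A,w):= f\bigl((I,A)\cdot w\bigr)$, interpreted as a regular function on $\hat{\bM}\times\hat{\bE}$ by using the continuous $\hat{\bU}$-extension of the $\bU$-action on $\hat{\bE}$ to make sense of the second factor for $A\in\hat{\bM}$. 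Because $f$ has level $n$, only finitely many entries of $A$ (coming from a top-left block) and of $w$ enter the formula, so $F_f$ is an honest polynomial. Property (i) follows from $(I,I)\cdot w=w$, and property (ii) is then the displayed identity applied on the orbit, together with the $\bH$-stability of $Z$.

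The main obstacle I foresee is justifying that the single formula $F_f(A,w)=f((I,A)\cdot w)$ simultaneously (a) represents a well-defined regular function on all of $\hat{\bM}\times\hat{\bE}$ through the $\hat{\bU}$-formalism (which guarantees polynomiality), and (b) on the orbit, where $A\in\GL$, agrees with the dual $\bG$-action computation that supplies the $\bH$-reduction. Establishing this compatibility reduces to checking, via the underlying polynomial functor description of $\bE$, that the continuous $\hat{\bU}$-extension and the $\bG$-action on $\hat{\bE}$ yield the same polynomial expression in the finitely many matrix entries that affect a level-$n$ function; this is where the bulk of the technical work lies.
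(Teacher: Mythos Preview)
Your overall architecture---produce, for each $f$ vanishing on $Z$, a regular function on $\hat{\bM}\times\hat{\bE}$ that vanishes on $Z^+$ and restricts to $f$ over $\{I\}$---matches the paper exactly. The gap is in the construction of that function. Your formula $F_f(A,w)=f\bigl((I,A)\cdot w\bigr)$ cannot be made regular on $\hat{\bM}\times\hat{\bE}$ by the mechanism you invoke. The continuous $\hat{\bU}$-extension of the action on $\hat{\bE}$ applies only to \emph{upper-triangular} matrices; the paper explicitly notes that the action of $\bL$ on $\hat{\bE}$ does \emph{not} extend to $\hat{\bL}$, and a fortiori there is no polynomial action of all of $\hat{\bM}$. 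Concretely, already for $\bE=\bV$ (standard representation of the second $\GL$) the group action of $(I,A)$ on $\hat{\bV}$ involves $A^{-1}$, so $f((I,A)\cdot w)$ is not polynomial in the entries of $A$; and the monoid/transpose action on $\hat{\bV}$, which \emph{is} polynomial in the matrix entries, is only well-defined for $A\in\hat{\bU}$ because otherwise one meets genuine infinite sums. So the ``compatibility check'' you flag as the main obstacle is not a technicality to be verified; it is false as stated.

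The paper's fix is exactly to interpolate between these two facts. One parametrizes a dense piece of $\hat{\bM}$ by an LU-type decomposition $\varphi(u,v)={}^t u\,v$ with $(u,v)\in\hat{\bU}\times\hat{\bU}'$, defines $h_f(w,x)=f(w\cdot x)$ using the honest $\hat{\bU}\times\hat{\bU}$-action (so $h_f$ is regular on $\hat{\bW}\times\hat{\bE}$), and then proves an algebraic lemma: after multiplying by a monomial $m_f$ in the diagonal entries of $u$, the function $m_f h_f$ factors through $\varphi$, yielding a regular $H_f$ on $\hat{\bM}\times\hat{\bE}$. The denominator-clearing is not cosmetic; it is what converts a function that is only defined on the upper-triangular chart into a global polynomial on $\hat{\bM}$. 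Your group-theoretic identity $(I,A)(g,h)=(g,{}^t g^{-1})$ is the right heuristic and is essentially what drives the paper's Lemma showing $H_f$ vanishes on the orbit, but to make it rigorous you must pass through the $\hat{\bW}$-parametrization and the clearing-of-denominators step rather than asserting a direct $\hat{\bM}$-action.
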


Before proving the proposition, we note an important consequence.

\begin{corollary}
The function $Z \mapsto Z^+$ defines an order-preserving injection
\begin{equation} \label{eq:map}
\{ \text{$\bH$-stable closed subsets of $\hat{\bE}$} \} \to
\{ \text{$\bG$-stable closed subsets of $\hat{\bM} \times \hat{\bE}$} \}
\end{equation}
In particular, $\hat{\bE}$ is topologically $\bH$-noetherian.
\end{corollary}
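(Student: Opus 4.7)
The proof is essentially assembling three pieces that are already at our disposal: Proposition~\ref{prop:main}, Proposition~\ref{prop:glgl}, and the observation that $\hat{\bM} \times \hat{\bE}$ is the dual of a polynomial representation of $\bG$. I will spend most of the discussion on the last point, since the first two are direct formal inputs.

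First, I would dispose of order-preservation and injectivity. If $Z_1 \subset Z_2$ are $\bH$-stable closed subsets of $\hat{\bE}$, then $\{I\} \times Z_1 \subset \{I\} \times Z_2$, so passing to $\bG$-orbits and then to closures gives $Z_1^+ \subset Z_2^+$. For injectivity, suppose $Z_1^+ = Z_2^+$. By Proposition~\ref{prop:main}, intersecting both sides with the closed subset $\{I\} \times \hat{\bE}$ yields
\begin{displaymath}
\{I\} \times Z_1 = (\{I\} \times \hat{\bE}) \cap Z_1^+ = (\{I\} \times \hat{\bE}) \cap Z_2^+ = \{I\} \times Z_2,
\end{displaymath}
whence $Z_1 = Z_2$. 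So the map is an order-preserving injection.

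The main step is then to verify that $\hat{\bM} \times \hat{\bE}$ is topologically $\bG$-noetherian, so that any descending chain $Z_1 \supset Z_2 \supset \cdots$ yields a descending chain $Z_1^+ \supset Z_2^+ \supset \cdots$ of $\bG$-stable closed subsets that must stabilize, forcing the original chain to stabilize by injectivity. For this I would observe that, under the action $(g,h) \cdot A = gA\,{}^th$, the space $\bM$ is identified with $\bV \boxtimes \bV$ as a representation of $\bG = \GL \times \GL$ (the $(i,j)$ entry of $A$ corresponding to the coefficient of $e_i \otimes e_j$). Hence $\bM \oplus \bE$ is a polynomial representation of $\bG$, and $\hat{\bM} \times \hat{\bE}$ is canonically identified with the dual of this polynomial representation. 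Proposition~\ref{prop:glgl} then gives that $\hat{\bM} \times \hat{\bE}$ is topologically $\bG$-noetherian, completing the argument.

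There is no real obstacle here beyond bookkeeping; the substantive content has already been packed into Proposition~\ref{prop:main} (which supplies injectivity) and Proposition~\ref{prop:glgl} (which supplies noetherianity of the ambient space). The only thing to be a little careful about is matching the $\bG$-action on $\hat{\bM}$ with the polynomial structure, which is immediate from the definition once one writes $\bM$ as $\bV \boxtimes \bV$.
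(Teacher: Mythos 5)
Your proposal is correct and follows the paper's own argument essentially verbatim: injectivity from Proposition~\ref{prop:main}, noetherianity of $\hat{\bM} \times \hat{\bE}$ from Proposition~\ref{prop:glgl} applied to the polynomial representation $\bM \oplus \bE$ of $\bG$, and the descending chain transfer via injectivity. The added remark identifying $\bM$ with $\bV \boxtimes \bV$ under the action $(g,h)\cdot A = gA\,{}^t h$ is a correct (and welcome) elaboration of a point the paper leaves implicit.
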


\begin{proof}
It is clear that $Z \mapsto Z^+$ is order preserving. By the proposition, we can recover $Z$ from $Z^+$, and so the map is injective. Since $\hat{\bM} \times \hat{\bE}$ is the dual of the polynomial representation $\bM \oplus \bE$ of $\bG$, Proposition~\ref{prop:glgl} shows that it is topologically $\bG$-noetherian. In particular, the right side of \eqref{eq:map} satisfies the descending chain condition. It follows that the left side does as well, and so $\hat{\bE}$ is topologically $\bH$-noetherian.
\end{proof}

Let $\hat{\bU}'$ be the subset of $\hat{\bU}$ consisting of matrices where all diagonal entries are~1, and let $\hat{\bW}=\hat{\bU} \times \hat{\bU}'$. Let $\varphi \colon \hat{\bW} \to \hat{\bM}$ be the function defined by $\phi(u,v)={}^tuIv$. Note that since ${}^tu \in \hat{\bL}$ and $v \in \hat{\bU}$, this matrix product is defined. Let $\varphi_n \colon \hat{\bW}_n \to \hat{\bM}_n$ be defined by the same formula. We note that the diagram
\begin{displaymath}
\xymatrix{
\hat{\bW} \ar[r]^{\varphi} \ar[d] & \hat{\bM} \ar[d] \\
\hat{\bW}_n \ar[r]^{\varphi_n} & \hat{\bM}_n }
\end{displaymath}
commutes. That is, if $i,j \le n$ then the $(i,j)$ entry of $\varphi(u,v)$ only depends on the top left $n \times n$ blocks of $u$ and $v$.

\begin{lemma} \label{lem:1}
Let $X$ be an affine variety over $\bk$, and let $h \colon \hat{\bW}_n \times X \to \bk$ be a regular function. Then there is a monomial $m$ in the diagonal entries of $\hat{\bU}_n$ and a regular function $H \colon \hat{\bM}_n \times X \to \bk$ such that $H(\varphi(w), x)=m(u) h(w,x)$ holds for all $w=(u,v) \in \hat{\bW}_n$ and $x \in X$.
\end{lemma}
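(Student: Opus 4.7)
The plan is to exploit the LU structure of $\varphi_n$ and to invert it by classical Gaussian elimination / Cramer's rule. Writing $u \in \hat\bU_n$ with diagonal entries $u_{11},\ldots,u_{nn}$, the matrix $\varphi_n(u,v) = {}^tuv$ is the product of the lower triangular matrix ${}^tu$ (with diagonal $u_{11},\ldots,u_{nn}$) and the upper unitriangular matrix $v$. Multiplicativity of the determinant on triangular blocks gives, identically on $\hat\bW_n$,
\[
\Delta_i\bigl(\varphi_n(u,v)\bigr) \;=\; u_{11} u_{22} \cdots u_{ii}, \qquad i = 1, \ldots, n,
\]
where $\Delta_i(M)$ denotes the $i$-th leading principal minor of $M$. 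Hence $\varphi_n$ maps the open subset $\hat\bW_n^\circ := \{(u,v) : u_{11}\cdots u_{nn} \ne 0\}$ into the open subset $\hat\bM_n^\circ := \{M : \Delta_1(M)\cdots \Delta_n(M) \ne 0\}$.

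The first step is to show that $\varphi_n \colon \hat\bW_n^\circ \to \hat\bM_n^\circ$ is an isomorphism of varieties. Injectivity is the uniqueness of the LDU decomposition of a matrix with nonzero leading minors. For regularity of the inverse $\psi_n$, I would solve the linear system $M = {}^tu \cdot v$ for the entries of $u$ and $v$ one row/column at a time (Gaussian elimination), which expresses each $u_{ki}$ (for $k \le i$) as a rational function on $\hat\bM_n$ with denominator dividing a power of $\Delta_{k-1}$, and each $v_{ij}$ (for $i < j$) as one with denominator dividing a power of $\Delta_i$; in particular, $u_{ii} = \Delta_i/\Delta_{i-1}$. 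All denominators are thus monomials in $\Delta_1,\ldots,\Delta_n$.

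Next, given $h$ regular on $\hat\bW_n \times X$, form the rational function $\tilde H := h \circ (\psi_n \times \id_X)$ on $\hat\bM_n^\circ \times X$. Since $h$ is polynomial in the finitely many coordinates $u_{ij}, v_{ij}$ with coefficients in $\bk[X]$, the denominator estimate above provides a monomial $\tilde m(M) = \prod_i \Delta_i(M)^{a_i}$ for which $H := \tilde m \cdot \tilde H$ extends to a regular function on all of $\hat\bM_n \times X$. Pulling back by $\varphi_n$ and using the identity $\Delta_i(\varphi_n(u,v)) = u_{11}\cdots u_{ii}$, the factor $\tilde m(\varphi_n(u,v))$ becomes a monomial $m(u) = \prod_i u_{ii}^{b_i}$ in the diagonal entries of $u$, and one obtains $H(\varphi_n(w), x) = m(u)\, h(w,x)$ on the dense open $\hat\bW_n^\circ \times X$. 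Both sides being regular on the irreducible affine variety $\hat\bW_n \times X$, this identity extends to the whole space.

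The main obstacle is the denominator control in the second step: one must verify carefully that the Gaussian elimination expressions for $\psi_n$ introduce no extraneous denominators beyond the leading principal minors. This is standard but deserves a clean inductive presentation: the top-left $i\times i$ block of the decomposition depends only on $M_{[i]}$, and recovering the $i$-th row of ${}^tu$ and $i$-th column of $v$ from the already-determined $(i-1)\times(i-1)$ piece requires dividing by $u_{ii} = \Delta_i/\Delta_{i-1}$, introducing a single factor of $\Delta_i$ in the denominator at each stage.
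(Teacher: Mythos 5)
Your argument is correct, and its computational core is the same as the paper's---inverting $(u,v)\mapsto {}^tuv$ by Gaussian elimination with denominators controlled by the pivots---but it is packaged differently. The paper works entirely on the $\hat{\bW}_n$ side and purely algebraically: viewing $\Sym(\bM_n)\otimes\cO_X$ as a subring of $\Sym(\bW_n)\otimes\cO_X$ via $\varphi_n$, it shows by a direct induction that every coordinate $u_{ij}$ and $v_{kl}$ is a polynomial in the entries $a_{ij}$ of ${}^tuv$ divided by a monomial in the diagonal entries $u_{ii}$; clearing denominators in $h$ then produces $H$ at once, with no localization on the target and no density argument. You instead pass through the geometric statement that $\varphi_n$ restricts to an isomorphism from the locus $\{u_{11}\cdots u_{nn}\neq 0\}$ onto the locus where all leading principal minors $\Delta_i$ are nonzero (using $\Delta_i\circ\varphi_n=u_{11}\cdots u_{ii}$ and uniqueness of the LU factorization), pull $h$ back along the inverse, clear the $\Delta_i$-denominators, and then extend the resulting identity from a dense open set. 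This buys a cleaner conceptual picture (the monomial $m$ is visibly the pullback of a monomial in the $\Delta_i$), at the cost of two extra verifications: the denominator bookkeeping for the inverse map (which you rightly flag as the delicate point, and which is exactly the induction the paper writes out on the source side), and the density step. On the latter, note that you invoke irreducibility of $\hat{\bW}_n\times X$, which need not hold if $X$ is reducible; no hypothesis on $X$ is needed, since for each fixed $x\in X$ the two regular functions agree on the dense open subset $\{u_{11}\cdots u_{nn}\neq 0\}$ of the irreducible affine space $\hat{\bW}_n$ and hence agree everywhere (equivalently, $u_{11}\cdots u_{nn}$ is a nonzerodivisor in $\Sym(\bW_n)\otimes\cO_X$). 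With that small repair your proof is complete.
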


\begin{proof}
Consider the polynomial ring $R = \Sym(\bW_n) = \bk[u_{ij}, v_{kl}]_{1 \leq i \leq j \leq n, 1 \leq k < l \leq n} \otimes \cO_X$. The morphism $\phi_n$ induces an injective ring homomorphism $\Sym(\bM_n) \otimes \cO_X \to R$, so we may view the former as a subring of the latter. Note that this subring is generated (as an $\cO_X$-algebra) by terms of the form $\sum_{k=1}^iu_{ki}v_{kj}$ for $i < j$ and terms of the form $u_{ji} + \sum_{k=1}^{j-1}u_{ki}v_{kj}$ for $i \geq j$. We denote these terms by $a_{ij}$. It thus suffices to show that all $u_{ij}$ and $v_{kl}$ can be expressed as a quotient of a polynomial in the $a_{ij}$ by a monomial in the $u_{ii}$. We do so by induction.

We have $u_{1i} = a_{i1}$ for any $i \geq 1$ and $v_{1j} = \frac{a_{1j}}{u_{11}}$ for any $j > 1$. Both of these are expressions of the desired form. Now suppose that $u_{ki}$ and $v_{kj}$ can be expressed in the desired form for all $k \leq i < j$ with $k < K$, for some $K$. For $i \geq K$ we see that $a_{iK}$ is equal to $u_{Ki}$ plus terms of the form $u_{ki}v_{kK}$ with $k < K$; since we have an expression of the desired form for each of these other terms, we find one for $u_{Ki}$. For $j > K$, we find that $a_{Kj}$ is equal to $u_{KK}v_{Kj}$ plus terms of the form $u_{kK}v_{kj}$ with $k < K$; once again, this yields an expression of the desired form for $v_{Kj}$. This concludes the proof of the lemma.
\end{proof}

Let $f$ be a regular function on $\hat{\bE}$ of level $n$ that vanishes on $Z$. We define a function $h_f$ on $\hat{\bW} \times \hat{\bE}$ by
\begin{displaymath}
h_f(w, x)= f(w \cdot x).
\end{displaymath}
Note that the above formula makes use of the action map $\hat{\bU} \times \hat{\bU} \times \hat{\bE} \to \hat{\bE}$, which exists since $\hat{\bE}$ is a polynomial representation of $\bG$.

We claim that $h_f$ is a regular function of level $n$. Consider the diagram
\begin{displaymath}
\xymatrix{
\hat{\bW} \times \hat{\bE} \ar[r] \ar[d] & \hat{\bE} \ar[r]^f \ar[d] & \bk \ar@{=}[d] \\
\hat{\bW}_n \times \hat{\bE}_n \ar[r] & \hat{\bE}_n \ar[r]^f & \bk }
\end{displaymath}
The left horizontal maps are the action maps. Both squares commute, so the whole diagram does, and thus $h_f$ factors through the left map and so has level $n$.

It follows from the Lemma~\ref{lem:1} that there is a monomial $m_f$ in the diagonal entries of $\hat{\bU}_n$ such that $m_f h_f$ induces a regular function on $\hat{\bM} \times \hat{\bE}$ of level $n$; call this function $H_f$.

\begin{lemma} \label{lem:2}
The function $H_f$ vanishes on $Z^+$.
\end{lemma}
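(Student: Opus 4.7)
Since $H_f$ is regular, it suffices to show it vanishes on the dense subset $\bG \cdot (\{I\} \times Z) \subset Z^+$. Fixing $(g,h) \in \bG$ and $z \in Z$, the corresponding orbit point is $({}^t g^{-1} h^{-1}, (g,h) \cdot z)$; we must show $H_f$ vanishes there.

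The plan is to exploit the defining relation $H_f(\varphi(u,v), x) = m_f(u) f((u,v) \cdot x)$ by factoring $M := {}^t g^{-1} h^{-1}$ through $\varphi$. On a Zariski-dense open subset of $\bG$, the top-left $n \times n$ block of $M$ (with $n$ the level of $f$) has non-vanishing leading principal minors and thus admits a unique $LDU$-type factorization $M|_n = {}^t u \cdot v$ with $u$ invertible upper-triangular and $v$ unit upper-triangular. Lifting $u, v$ to elements of $\hat\bU \cap \GL$ and $\hat\bU' \cap \GL$ by extending trivially outside the $n \times n$ block, we obtain $(u,v) \in \hat\bW$ with $\varphi(u,v) = M$ at level $n$; this is enough because $H_f$ has level $n$.

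The crucial identity is that $(u,v)(g,h) \in \bH$: from ${}^t u v = {}^t g^{-1} h^{-1}$ one rearranges (multiplying on the left by ${}^t g$) to $vh = {}^t(ug)^{-1}$, so setting $k := ug$ gives $(u,v)(g,h) = (k, {}^t k^{-1}) \in \bH$. The defining relation then yields
$$H_f(M, (g,h) \cdot z) \;=\; m_f(u) \cdot f\bigl((k, {}^t k^{-1}) \cdot z\bigr) \;=\; 0,$$
because $Z$ is $\bH$-stable and $f$ vanishes on $Z$. To extend from the dense open to all of $\bG$: for each fixed $z$, the map $(g,h) \mapsto H_f({}^t g^{-1} h^{-1}, (g,h) \cdot z)$ is regular on each finite-dimensional $\GL_m \times \GL_m$ (for $m \ge n$), vanishes on a dense open subset, and hence vanishes identically.

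The main obstacle is the algebraic identity $(u,v)(g,h) \in \bH$ linking the LU factorization of $M$ to the stabilizer of $I$ in $\bG$; it requires the compatibility of the $\hat\bW$-action on $\hat\bE$ with the $\bG$-action (which holds for our lifts, since they lie in $\hat\bU \cap \GL$ and $\hat\bU' \cap \GL$), plus a short matrix manipulation. Everything else — the density of the LU locus, the passage to the closure $Z^+$ — is routine.
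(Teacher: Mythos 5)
Your overall strategy is the paper's: the paper decomposes a generic element of $\bG_m$ as $wh$ with $w\in\bW_m$, $h\in\bH_m$, which is exactly your LU factorization of $M={}^tg^{-1}h^{-1}$ read back in the group. But there is a genuine gap in how you justify the ``crucial identity.'' You factor only the top-left $n\times n$ block of $M$ and extend $u,v$ by the identity outside that block. With this choice ${}^tu v$ agrees with ${}^tg^{-1}h^{-1}$ only in the top-left $n\times n$ block, not as matrices: for $(g,h)\in\GL_m\times\GL_m$ with $m>n$ the two genuinely differ (e.g.\ $n=1$, $m=2$: ${}^tuv$ is diagonal while $M$ is not). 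So the rearrangement giving $(u,v)(g,h)=(k,{}^tk^{-1})\in\bH$ is false in general, and the asserted vanishing of $f\bigl((ug,vh)\cdot z\bigr)$ is not justified: $f$ vanishes on $Z$, but $(ug,vh)\cdot z$ need not lie in $Z$, and although $f$ has level $n$, the projection $\hat{\bE}\to\hat{\bE}_n$ is equivariant only for $\hat{\bU}$ (upper-triangular elements), not for an arbitrary group element like $(ug,vh)$, so you cannot simply truncate the acting element to level $n$.

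The repair is small and lands you on the paper's argument. Since you restrict to $\bG_m=\GL_m\times\GL_m$ at the end anyway, perform the factorization on the full top-left $m\times m$ block of $M$, on the dense open locus of $\bG_m$ where its leading principal minors do not vanish. Then ${}^tu v=M$ holds on the nose (both sides are the identity outside the $m\times m$ block), your matrix manipulation correctly gives $(u,v)(g,h)=(ug,{}^t(ug)^{-1})\in\bH_m$, and $H_f\bigl(M,(g,h)z\bigr)=m_f(u)\,f\bigl((ug,{}^t(ug)^{-1})z\bigr)=0$ by $\bH$-stability of $Z$; your regularity-plus-density argument on $\bG_m$ then finishes the proof. (Alternatively one can keep the level-$n$ factorization, but then an extra argument is required and is missing from your write-up: the level-$n$ factors are the truncations of the level-$m$ factors $(u',v')$, and the projection to level $n$ is $\hat{\bU}$-equivariant, so $f\bigl((ug,vh)z\bigr)=f\bigl((u'g,v'h)z\bigr)$, and the latter vanishes.)
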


\begin{proof}
Let $\bG_m=\GL_m \times \GL_m$, and let $\bH_m \subset \bG_m$ be the set of matrices of the form $(g, {}^tg^{-1})$. It suffices to show that $H_f$ vanishes on a dense subset of the $\bG_m$-orbit of $\{I\} \times Z$ for all $m \gg 0$. Now, the set $\bW_m \bH_m \subset \bG_m$ is dense, since a generic element of $\GL_m$ can be written as a product of an upper triangular and lower triangular matrix. Suppose that $g=wh$, with $m \ge n$. Writing $w=(u,v)$, we have $g=(uh,v{}^th^{-1})$. Note that
\begin{displaymath}
g \cdot I={}^t(uh)^{-1} I (v{}^th^{-1})^{-1}={}^tu^{-1} I v^{-1}=\varphi(w^{-1}).
\end{displaymath}
Let $z \in Z$. We then have
\begin{displaymath}
H_f(g(I,z))=H_f(\varphi(w^{-1}), gz)=m_f(u^{-1}) h_f(w^{-1}, gz).
\end{displaymath}
Now, by definition, we have
\begin{displaymath}
h_f(w^{-1}, gz)=f(w^{-1} whz)=f(hz)
\end{displaymath}
Since $Z$ is $\bH$-stable, we have $hz \in Z$. Since $f$ vanishes on $Z$, we thus see that this expression vanishes.
\end{proof}

\begin{lemma} \label{lem:3}
Suppose $z \in \hat{\bE}$ does not belong to $Z$. Then there exists $f$ vanishing on $Z$ such that $H_f(I,z) \ne 0$.
\end{lemma}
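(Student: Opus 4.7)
The plan is short and direct. Since $Z$ is Zariski closed in $\hat{\bE}$ and $z \notin Z$, the defining ideal of $Z$ in $\Sym(\bE)$ contains some element $f$ with $f(z) \ne 0$, and since $\bE = \bigcup_n \bE_n$, any such $f$ automatically has some finite level $n$. Thus $f$ falls into the class of regular functions to which the construction of $h_f$, $m_f$, and $H_f$ above applies. I claim this $f$ already does the job, i.e.\ $H_f(I, z) = f(z)$.

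The key step is to evaluate the defining identity
\[
H_f(\varphi(w), x) = m_f(u) \cdot h_f(w, x)
\]
at the single distinguished point $w = (I, I) \in \hat{\bU} \times \hat{\bU}' = \hat{\bW}$ and $x = z$. Three observations finish the computation: (i) $\varphi(I, I) = {}^tI \cdot I \cdot I = I \in \hat{\bM}$, so the left-hand side becomes $H_f(I, z)$; (ii) all diagonal entries of the identity matrix equal $1$, so the monomial $m_f$ in the diagonal entries of $\hat{\bU}_n$ satisfies $m_f(I) = 1$; and (iii) the pair $(I, I)$ acts trivially on $\hat{\bE}$, so $h_f((I, I), z) = f((I, I) \cdot z) = f(z)$. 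Combining these three observations in the displayed identity yields $H_f(I, z) = 1 \cdot f(z) = f(z) \ne 0$.

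There is essentially no obstacle here: the statement is an immediate unpacking of the definition of $H_f$ at the distinguished preimage $(I, I)$ of $I$ under $\varphi$. In conjunction with Lemma~\ref{lem:2}, which shows that $H_f$ vanishes on $Z^+$, this gives $(I, z) \notin Z^+$ whenever $z \notin Z$, so the fiber of $Z^+$ over $I \in \hat{\bM}$ is exactly $\{I\} \times Z$, completing the proof of Proposition~\ref{prop:main}.
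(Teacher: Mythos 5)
Your proof is correct and follows essentially the same route as the paper: choose a finite-level $f$ cutting out $Z$ away from $z$, then evaluate the identity $H_f(\varphi(w),x)=m_f(u)h_f(w,x)$ at $w=(I,I)$, $x=z$, using $\varphi(I,I)=I$, $m_f(I)=1$, and $h_f((I,I),z)=f(z)$. The only cosmetic difference is that the paper locates $f$ by first projecting to $\hat{\bE}_n$ and working with $Z_n$, whereas you note directly that any function in the defining set of the closed set $Z$ that is nonzero at $z$ automatically has finite level; both are fine.
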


\begin{proof}
Let $n$ be such that the projection $\ol{z}$ of $z$ to $\hat{\bE}_n$ does not belong to $Z_n$. We can thus find a regular function $f$ on $\hat{\bE}_n$ that vanishes on $Z_n$ but does not vanish at $\ol{z}$. We have
\begin{displaymath}
H_f(I,z)=H_f(\varphi(I,I), z)=m_f(I) h_f((I,I),z).
\end{displaymath}
Since $m_f(u)$ is monomial in the diagonal entries of $u$, it follows that $m_f(I) \ne 0$. Furthermore, we have $h_f(I,I,z)=f(z) \ne 0$. We thus see that $H_f(I,z) \ne 0$.
\end{proof}

\begin{proof}[Proof of Proposition~\ref{prop:main}]
It is clear that $\{I\} \times Z$ is contained in the intersection of $\{I\} \times \hat{\bE}$ and $Z^+$. Suppose that $z \in \hat{\bE}$ does not belong to $Z$. Let $f$ be as in Lemma~\ref{lem:3}. By Lemma~\ref{lem:2}, we see that $H_f$ belongs to the ideal of $Z^+$. Since $H_f(I,z) \ne 0$, it follows that $(I,z) \not\in Z^+$, which proves the result.
\end{proof}

\section{Some counterexamples} \label{s:counter}

Draisma's theorem states that if $\bE$ is a polynomial representation of $\bG$ then $\hat{\bE}$ is topologically $\bG$-noetherian. One can identify $\bE_n$ with the spectrum of $\Sym(\hat{\bE}_n)$, and in this way regard $\bE=\varinjlim \bE_n$ as an ind-scheme. (The elements of $\Sym(\hat{\bE})$ define functions on $\bE$, and their zero loci define the closed sets.) It is therefore sensible to ask if $\bE$ is topologically noetherian. In \cite{ess} it is proved that this is the case: in fact, the $\bG$-stable closed sets of $\bE$ and $\hat{\bE}$ are shown to be in bijection, and so noetherianity of $\bE$ follows from Draisma's theorem.

We now explain that ind- version of Theorem~\ref{thm:main} fails. In fact, noetherianity fails due to an obvious obstruction in each case.

First consider the $\GL$ case. If $m$ is an element of $\bM$ then one can make sense of the determinant $\chi(m)=\det(1-tm)$, a polynomial in $t$. The action of $\GL$ on $\bM$ by conjugation defines an algebraic representation, and leaves $\chi$ invariant. Let $c_i(m)$ be the coefficient of $t^i$ in $\chi(m)$. Then $c_i \colon \bM \to \bA^1$ is a $\GL$-invariant function. Taken together, the $c_i$ define an $\GL$-invariant function $c \colon \bM \to \bA^{\infty}$, where the target is the ind-scheme $\varinjlim \bA^n$. It is easy to see that $c$ is surjective, from which it follows that $\bM$ is not topologically $\bH$-noetherian. (If $Z_{\bullet}$ is an infinite strictly descending chain of closed subsets of $\bA^{\infty}$ then $c^{-1}(Z_{\bullet})$ is an infinite strictly descending chain of $\bH$-stable closed subsets of $\bS$.)

The other cases are similar. In the symplectic case, one considers the characteristic polynomial of anti-symmetric matrices, while in the orthogonal case one uses symmetric matrices. We remark that $\hat{\bV}$ (and more generally $\hat{\bV}^d$ for any $d \ge 1$) is known to be topologically $\fS$-noetherian \cite{cohen,cohen2,aschenbrenner,hillar}, where $\fS$ is the infinite symmetric group, and for similar reasons $\bV$ is not topologically $\fS$-noetherian (one makes an invariant polynomial by using the coordinates as roots, so that the $c_i$'s are elementary symmetric functions).

It is interesting to observe that these counterexamples do not apply in the pro- setting since the invariants they use no longer make sense: one cannot take the characteristic polynomial of an element of $\hat{\bM}$, as this would involve infinite sums. Similarly, the proof of noetherianity in the pro- setting does not apply in the ind-setting since the element $I \in \hat{\bM}$ does not belong to $\bM$.

\end{document}